\DeclareMathOperator*{\ourbox}{\text{\raisebox{-0.25ex}{\scalebox{1.25}{$\square$}}}}
\newcommand\Osq{\mathbin{\text{\scalebox{.84}{$\square$}}}}
\theoremstyle{plain}
\newtheorem{theorem}{Theorem}[section]		
\newtheorem{lemma}[theorem]{Lemma}
\newtheorem{corollary}[theorem]{Corollary}
\newtheorem{problem}[theorem]{Problem}
\theoremstyle{remark}
\begin{document}

\begin{frontmatter}[classification=text]

\title{A Multidimensional Ramsey Theorem} 

\author[antonio]{Ant\'onio Gir\~ao \thanks{Mathematical Institute, University of Oxford, Andrew Wiles Building, Radcliffe Observatory Quarter, Woodstock Road, Oxford, United Kingdom.  E-mail: \texttt{\{girao,kronenberg,scott\}@maths.ox.ac.uk}.}\text{ } \footnotemark[3]}
\author[gal]{Gal Kronenberg  \footnotemark[1]\text{ } \thanks{Research supported by the European Union's Horizon
2020 research and innovation programme under the Marie Sk\l odowska Curie grant agreement No. 101030925, and by the Royal Commission for the Exhibition of 1851.}}
\author[alex]{Alex Scott  \footnotemark[1]\text{ } \thanks{Research supported by EPSRC grant EP/V007327/1. AS would like to thank the Department of Mathematics at the University of Arizona where some of this work was completed.}}

\begin{abstract}
In this paper, we prove a
``multidimensional" generalisation of Ramsey's Theorem to cartesian products
of graphs, showing that a doubly exponential upper bound is enough in
every dimension. 
More precisely, we prove that for every $r,n,d\in \mathbb{N}$, in any $r$-colouring of the edges of the Cartesian product $\ourbox^{d}\!K_N$ of $d$ copies of $K_N$ there is a copy of $\ourbox^{d}\!K_n$ such that the edges in each direction are monochromatic, provided $N\geq 2^{2^{C(d)rn^{d}}}$. 
  As an application of our new approach we obtain improvements on the multidimensional Erd\H{o}s-Szekeres Theorem proved by Fishburn and Graham 30 years ago thus confirming a conjecture posed by Buci\'c, Sudakov, Tran.
  \end{abstract}
\end{frontmatter}

\section{Introduction}


The study of Ramsey theory is a longstanding and central part of combinatorics.
As usual, for positive integers $r,k$, the Ramsey number $R_r(k)$ is the smallest $n$ for which every $r$-colouring of $K_n$ contains a monochromatic copy of $K_k$. Ramsey~\cite{Ramsey} showed in 1930 that these numbers exist. Since then, Ramsey numbers have been studied extensively, upper and lower bounds have been proved and many generalisations have been considered, see, e.g. \cite{ConlonRamseyUpper,ErdosRamseyLower,ErdosSzekeres,RamseyLowerBound,LefmannRamsey,SahRamsey,SpencerRamseyLower,ThomasonRamsey}.  Even for $r=2$, the asymptotics are still not fully resolved. It is not hard to show that $R_2(k)$ grows at exponential rate, but the constant in the exponent is yet not known: the best current bounds are $(1+o(1))\frac {\sqrt 2 (k+1)}e2^{k+1/2} \leq R_2(k+1) \leq e^{-c\log^2k}\binom {2k}{k}$ the lower bound due to Spencer \cite{SpencerRamseyLower} and upper bound by Sah \cite{SahRamsey}.  
For larger $r$, the gap between the upper and
lower bound is even larger, and even for $k=3$, we do not understand the behaviour of $R_r(3)$ as $r\rightarrow \infty$. Very recently, Conlon and Ferber \cite{FerberConlon}, Wigderson \cite{Wigderson}, and Sawin \cite{Sawin} found nice constructions which give the best known lower bounds for $R_r(k)$ when $r\geq 3$. 
In a recent breakthrough, Campos, Griffiths, Morris and Sahasrabudhe \cite{CamposRamsey2023}, reduced the the long standing upper bound of $4^k$ to $(4-\varepsilon)^k$. This is the first exponential improvement over the upper bound
of Erd\H{o}s and Szekeres, proved in 1935.

In this paper, we prove a ``multidimensional" generalisation of Ramsey's Theorem for cartesian products of graphs. 
Given two graphs $H$ and $G$, we write $G\Osq H$ for the Cartesian product of $H$ and $G$, namely the graph 
with vertex set $V(G)\times V(H)$ in which $(x,y)$ is joined to $(x',y')$ if and only if $x=x'$ and $yy'\in E(H)$ 
or $xx'\in E(G)$ and $y=y'$.  The Cartesian product is associative, so it makes sense to write $G_1\Osq G_2\Osq\cdots\Osq G_d$ (without brackets) for the Cartesian product of $d$ graphs; we write $\ourbox^{d}G$ 
for the product $G\Osq\cdots\Osq G$ of $d$ copies of $G$. Note that in a Cartesian product of $d$ graphs $G_1,\dots,G_d$, there is an edge between $v=(v_1,\dots,v_d)$ and $w=(w_1,\dots,w_d)$ if and only if there is some $i$ such that $v_iw_i$ is an edge of $G_i$ and $v_j=w_j$ for $j\ne i$, and in this case we will say that the edge $vw$ is in {\em direction $i$}.

Given a colouring $c$ of the edges of $\ourbox^{d}\!K_n$, we say that $c$ is {\em monochromatic in every direction} if for each $i\in\{1,\dots,d\}$ there is some $c_i$ such that all edges in direction $i$ have colour $c_i$.  
For positive integers $r,d$, we define 
$R_r(d,n)$ to be the smallest $N$ such that every $r$-colouring of the edges of $\ourbox^{d}\!K_N$ contains a copy of $\ourbox^d\!K_n$ that is monochromatic in every direction.  Note that we cannot demand a copy of $\ourbox^{d}\!K_n$ that has the {\em same} colour in every direction, as we are asking for a full-dimensional subgraph: for example, $\ourbox^{d}\!K_N$ could be coloured with colour $1$ for all edges in direction $1$ and  coloured with 2 for the edges in all other directions. It is easy to see that if we demand a monochromatic copy of $\ourbox^{\ell}\!K_n$ then $\ell$ must be at most $\lceil d/r\rceil$. It will follow from \Cref{thm:mainRamsey} that this is also tight.
%

It is not too hard to prove that $R_r(d,n)$ exists by an iterated application of Ramsey's Theorem. However, this would only give an upper bound of tower-type as a function of $d$. Our main goal is to show that a doubly exponential bound on $n^{d}$ suffices. 
\begin{theorem}\label{thm:mainRamsey}
Let $d$ be a positive integer. There exists $C_d>0$ such that for every $n,r$ the following holds. 
For $N\geq r^{r^{C_drn^{d}}}$,
every $r$-colouring of $\ourbox^{d}\!K_N $ contains a copy of $\ourbox^{d}\!K_n$ which is monochromatic in every direction. That is, $R_r(d,n)\leq r^{r^{C_drn^d}}$.
\end{theorem}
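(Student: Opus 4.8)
The plan is to induct on the dimension $d$, with base case $d=1$ being Ramsey's theorem and the standard bound $R_r(n)\le r^{rn}$. The first step of the inductive argument is to restrict attention to very structured copies of $\ourbox^{d}K_n$, namely \emph{staircase} copies: fix a set $A\subseteq[N]$ of size $dn$ and let $A_1<A_2<\cdots<A_d$ be the $d$ consecutive blocks of $n$ elements of $A$; the copy is $A_1\times\cdots\times A_d$. The point of this shape is that a direction-$i$ edge inside such a box uses a pair from block $A_i$ together with one element from each other block, so the $d+1$ vertex-values it involves always occur in the \emph{same relative order} (the blocks below $A_i$, then the pair, then the blocks above). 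Hence its colour depends only on the sorted $(d+1)$-tuple of values. Recording, for each $(d+1)$-set $\{z_1<\dots<z_{d+1}\}\subseteq[N]$ and each $i$, the colour of the direction-$i$ edge with coordinate-$i$ pair $\{z_i,z_{i+1}\}$ and remaining coordinates $z_1,\dots,z_{i-1},z_{i+2},\dots,z_{d+1}$ gives an auxiliary colouring of $\binom{[N]}{d+1}$ with $r^{d}$ colours, and any monochromatic clique of size $dn$ for it produces a staircase copy of $\ourbox^{d}K_n$ monochromatic in every direction. This already yields $R_r(d,n)\le R^{(d+1)}_{r^{d}}(dn)$, where $R^{(d+1)}_{r^{d}}(\cdot)$ is the $(d+1)$-uniform, $r^{d}$-colour hypergraph Ramsey number; but that number is a tower of height about $d$, so this is only the tower-type bound mentioned in the introduction.

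To bring this down to a doubly exponential function I would peel the directions off one at a time instead of invoking $(d+1)$-uniform Ramsey in one shot. The idea is to single out one direction, say direction $d$: after fixing a reference set of coordinates for the first $d-1$ directions, a single application of a \emph{bounded-uniformity} (two- or three-uniform) coloured Ramsey theorem to a moderate-sized set of direction-$d$ coordinates pins down all colours involving direction $d$ (up to the positions of the chosen coordinates), and reduces the problem to finding a monochromatic-in-every-direction copy of $\ourbox^{d-1}K_{n'}$ inside the reference box — for the colouring coming from directions $1,\dots,d-1$, \emph{enriched} by appending to each edge a bounded amount of data about the direction-$d$ colours at its two endpoints. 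The key feature is that ``monochromatic in direction $i$'' for the enriched colouring already forces the appended data to be constant on the resulting $(d-1)$-dimensional sub-box (after discarding one vertex per block), so the enriched colouring can be fed directly into the induction hypothesis — the cost is an increase in the number of colours, \emph{not} an extra tower level. Iterating down to $d=1$, one is left with a single one-dimensional estimate $R^{(2)}_q(m)\le q^{qm}$ with $q$ roughly $r^{\mathrm{poly}_d(n)}$ and $m=O_d(n)$; this is doubly exponential with inner exponent of order $n^{d}$, matching the statement.

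I expect the main obstacle to be the bookkeeping that keeps each peeling stage cheap enough that the $d$ stages do not recombine into a tower. Every time the dimension drops, the enriched colouring has more colours, and the reference structures must be large enough for the lower-dimensional instance to have room; if a reference set ever had to be as large as the recursive Ramsey number itself (which is already doubly exponential), then appending information about it would push the colour count up by a further exponential and the recursion would tower after a few steps. So the heart of the argument has to be a careful choice of parameters guaranteeing that at every stage the number of colours stays singly exponential in a polynomial of $n$, while the reference sets stay doubly exponential in a polynomial of $n$; then the recursion $d\mapsto d-1$ costs only a polynomial increase in the governing complexity parameter, and the two exponential levels in the final bound come solely from the one coloured Ramsey estimate applied at the end.
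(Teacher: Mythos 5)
Your first reduction (staircase copies, giving $R_r(d,n)\le R^{(d+1)}_{r^d}(dn)$) is correct but, as you note, only reproduces the tower bound. The overall shape of your second phase --- apply one-dimensional Ramsey to the direction-$d$ fibers over a reference $(d-1)$-dimensional box, record for each base point the position and colour of the resulting monochromatic $K_n$, and then look for a $(d-1)$-dimensional sub-box on which this data agrees --- does match the paper's outline. The gap is in the mechanism you propose for enforcing the agreement: folding the fiber data into the edge colouring of the reference box and recursing with more colours. That recursion does not close. The data attached to a base point is a pair (position of an $n$-subset of the direction-$d$ coordinate set $C$, colour), and $C$ must have size at least $R_{r_j}(1,n)\approx r_j^{r_jn}$, where $r_j$ is the current number of colours; hence the number of possible data values is at least $\binom{|C|}{n}\approx r_j^{r_jn^2}$, so $\log r_{j+1}\gtrsim r_j n^2\log r_j$. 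This is exponential growth of $\log r_j$ at every peeling step: after $j$ steps the palette has size a tower of height about $j$, and the final one-dimensional Ramsey application returns a tower of height growing with $d$ --- exactly the bound the theorem is meant to beat. No choice of parameters rescues this within your scheme, because the position of the monochromatic fiber genuinely carries about $n\log|C|$ bits and $|C|$ must itself be a Ramsey number in the current palette. Your closing paragraph correctly identifies this danger but does not supply a way around it.

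The paper's resolution uses two devices absent from your proposal. First, a consistency lemma: by a pigeonhole over the (singly exponential number of) colour patterns of $\ourbox^{d-1}\!K_k$, one finds inside $\ourbox^{d}\!K_N$ a sub-box $\ourbox^{d}\!K_t$, with $t$ singly exponential in $n^d$ and $N$ doubly exponential, in which for each $i$ all slices spanned by the first $i$ coordinates carry literally the same colouring. Second, a density version of the target statement: in such a consistent colouring of $\ourbox^{d-1}\!K_t$, \emph{every vertex subset of density $\epsilon$} contains a box monochromatic in every direction, provided $t\ge\epsilon^{-\mathrm{poly}(n)}r^{\mathrm{poly}(n)\,r}$. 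With these in hand, the pigeonhole over fiber positions is absorbed as a density loss $\epsilon=\bigl(r\binom{u}{n}\bigr)^{-1}$, which costs only a polynomial factor in the exponent of $t$ (since $\epsilon^{-\mathrm{poly}(n)}$ is still singly exponential), rather than as a multiplication of the number of colours. That substitution --- density loss instead of colour multiplication --- is the missing idea; without it, or something equivalent, the peeling strategy towers.
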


As an immediate corollary, we see that any $r$-edge colouring of $\ourbox^{d}\!K_N$ contains a \textit{monochromatic} copy of $\ourbox^{\ell}\!K_n$ for $\ell=\lceil d/r\rceil $. 

\begin{corollary}
Let $n,d,r$ be positive integers and $\ell =\lceil d/r\rceil$. For $N\geq r^{r^{C_drn^{d}}}$, every $r$-edge-colouring of $\ourbox^{d}\!K_N$ contains a monochromatic copy of $\ourbox^{\ell}\!K_n$. The value of $\ell$ is tight. 
\end{corollary}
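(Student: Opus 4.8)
The corollary is a short consequence of \Cref{thm:mainRamsey}, obtained by a pigeonhole argument over the $d$ directions; the tightness of $\ell$ then follows from an explicit ``colour by direction'' construction. The deduction is routine, so the main thing to get right is the convention for what counts as a copy of a sub-box, and I will spell that out below.

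For the bound, fix $N\ge r^{r^{C_drn^{d}}}$ and an $r$-colouring $c$ of $\ourbox^{d}K_N$. By \Cref{thm:mainRamsey} there is a copy of $\ourbox^{d}K_n$ in $\ourbox^{d}K_N$ that is monochromatic in every direction; say this copy has vertex set $S_1\times\cdots\times S_d$ with each $S_i\subseteq\{1,\dots,N\}$ of size $n$, and write $c_i\in\{1,\dots,r\}$ for the common colour of the edges in direction $i$ of this copy. Since there are $d$ indices $i$ and only $r$ possible values $c_i$, by pigeonhole there is a colour $a$ and a set $I\subseteq\{1,\dots,d\}$ with $|I|=\lceil d/r\rceil=\ell$ such that $c_i=a$ for every $i\in I$. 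Fix any $t_j\in S_j$ for each $j\notin I$; then the set of vertices $(x_1,\dots,x_d)$ with $x_i\in S_i$ for $i\in I$ and $x_j=t_j$ for $j\notin I$ induces a copy of $\ourbox^{\ell}K_n$, all of whose edges lie in some direction $i\in I$ and hence have colour $a$. This is the desired monochromatic copy.

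For tightness, partition $\{1,\dots,d\}$ into $r$ parts $P_1,\dots,P_r$ as evenly as possible, so that $|P_j|\le\lceil d/r\rceil=\ell$ for all $j$, and colour every edge of $\ourbox^{d}K_N$ that lies in direction $i$ with the index $j$ for which $i\in P_j$. In any copy of $\ourbox^{\ell'}K_n$ sitting inside $\ourbox^{d}K_N$ in the natural (axis-aligned) way, its $\ell'$ directions correspond to $\ell'$ distinct directions of $\ourbox^{d}K_N$; if such a copy is monochromatic in colour $j$ then all of those directions lie in $P_j$, forcing $\ell'\le|P_j|\le\ell$. Hence this colouring contains no monochromatic copy of $\ourbox^{\ell+1}K_n$ for any $n\ge 2$ and any $N$, so $\ell$ cannot be increased. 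The one point to be careful about is exactly this notion of ``copy of $\ourbox^{\ell'}K_n$ inside $\ourbox^{d}K_N$'': it should be read, consistently with the notion of being ``monochromatic in every direction'' used in \Cref{thm:mainRamsey}, as a choice of $\ell'$ coordinate directions together with an $n$-subset of $\{1,\dots,N\}$ in each of them (and fixed values in the remaining coordinates).
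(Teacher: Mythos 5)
Your proof is correct and is exactly the deduction the paper intends (the paper treats the corollary as immediate from Theorem~\ref{thm:mainRamsey} and gives no explicit argument): pigeonhole the $d$ direction-colours to find $\lceil d/r\rceil$ directions of a common colour, and colour by direction-class for tightness. Your explicit remark that ``copy'' means an axis-aligned sub-box is a sensible clarification of the convention the paper uses implicitly, and is needed for the tightness claim to hold (e.g.\ for $n=2$ a single $K_N$ already contains $K_2\Osq K_2=C_4$ as an abstract subgraph).
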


Another foundational result in Ramsey theory appears in a paper  
Erd\H{o}s and Szekeres~\cite{ErdosSzekeres} from 1935:
any sequence of $n^2+1$ distinct real numbers contains either an increasing
or decreasing subsequence of length $n+1$. 
 There are a number of different ways to generalise the Erd\H{o}s-Szekeres Theorem 
to higher dimensions (see, for example, \cite{BurkillMonotone,BurkillMatrix,Kalmanson,Kruskal,Linial,Morse,Siders,Szabo}). Perhaps the most natural approach was developed thirty years ago by Fishburn and Graham~\cite{Fishburn}.

A {\em $d$-dimensional array} is an
injective function $f$ from $A_1 \times \ldots \times A_d$ to $\mathbb{R}$ where $A_1, \ldots A_d$ are non-empty subsets of $\mathbb{Z}$; we say $f$ has {\em size}
$|A_1|\times\dots\times|A_d|$; if $|A_i|=n$ for each $i$, it will be convenient to say that $f$ has
{\em size} $[n]^d$.  
A multidimensional array is said to be \textit{monotone} if for each direction all the $1$-dimensional subarrays in that direction are increasing or decreasing. 
In other words, for every $i$, one of the following holds:
\begin{itemize}
    \item For every choice of $a_j$, $j\ne i$, the function $f(a_1,\dots,a_{i-1},x,a_{i+1},\dots,a_d)$ is increasing in $x$.
    \item For every choice of $a_j$, $j\ne i$, the function $f(a_1,\dots,a_{i-1},x,a_{i+1},\dots,a_d)$ is decreasing in $x$.
\end{itemize}
Let $M_d(n)$ be the smallest $N$ such that a $d$-dimensional array on $[N]^{d}$ contains a monotone $d$-dimensional subarray of size $[n]^d$.
Fishburn and Graham~\cite{Fishburn} showed that $M_d(n)$ exists but their upper bounds were a tower of height $d$. 
Recently, Buci\'c, Sudakov, and Tran~\cite{Sudakov} proved considerably better upper bounds on $M_d(n)$, showing doubly exponential bounds in $n^{d-1}$ for $2$ and $3$ dimensions, and triply exponential bounds in $4$ or higher dimensions. 
\begin{theorem}(Buci\'c, Sudakov, and Tran)\label{bst}
\begin{enumerate}
    \item[i)] $M_2(n)\leq 2^{2^{(2+o(1))n}}$,
    \item[ii)] $M_3(n) \leq 2^{2^{(2+o(1))n^2}}$,
    \item[iii)]  $M_d(n) \leq 2^{2^{2^{O_d(n^{d-1})}}}$, \text{ for } $d\geq 4$. 
\end{enumerate}
\end{theorem}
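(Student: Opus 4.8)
The plan is to obtain the bounds from a Ramsey-type statement about $2$-colourings of Cartesian products --- concretely, \Cref{thm:mainRamsey} with $r=2$ --- and then to sharpen the low-dimensional cases by hand, exploiting that in one dimension the Erd\H{o}s--Szekeres theorem loses only a square root whereas Ramsey's theorem loses an exponential. Given an injective array $f\colon [N]^d\to\mathbb{R}$, I would $2$-colour the edges of $\ourbox^{d}K_N$ as follows: an edge in direction $i$ joins two vertices agreeing in every coordinate but the $i$-th, and we colour it \emph{red} if $f$ increases as the $i$-th coordinate increases along the edge and \emph{blue} otherwise. If $\ourbox^{d}K_N$ contains a copy of $\ourbox^{d}K_n$ that is monochromatic in every direction and this copy is an axis-parallel sub-box $A_1\times\dots\times A_d$, then in each direction $i$ the condition ``all direction-$i$ edges of the box receive the same colour'' says precisely that every $1$-dimensional subarray of $A_1\times\dots\times A_d$ in direction $i$ is increasing (if red) or decreasing (if blue), i.e.\ that $f$ restricted to the box is a monotone subarray of size $[n]^d$.

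So the first task is to argue that the copy may be taken to be an axis-parallel sub-box (when $n\ge 3$; the cases $n\le 2$ are immediate). The key point is that a clique on at least three vertices in a Cartesian product of complete graphs lies inside a single fibre, since a triangle cannot ``turn a corner'': if two incident edges of $\ourbox^{d}K_N$ lie in distinct directions $i\neq j$, their far endpoints differ in both coordinates $i$ and $j$ and so are non-adjacent. Hence every $K_n$-fibre of the copy lands inside an ambient fibre, and a short consistency argument --- two fibres of the copy in distinct directions meet in a single vertex and their union is not a clique, so they cannot land in ambient fibres of the same direction --- forces the copy to be an axis-parallel box whose directions are a permutation of the ambient ones; this is just the unique prime factorisation of $\ourbox^{d}K_n$ with respect to the Cartesian product, and one can alternatively note that the proof of \Cref{thm:mainRamsey} produces a box directly. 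Plugging in \Cref{thm:mainRamsey} with $r=2$ then gives $M_d(n)\le R_2(d,n)\le 2^{2^{O_d(n^d)}}$, which is doubly exponential in every dimension and in particular establishes part (iii) --- indeed more, lowering the tower height there from three to two.

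To obtain the sharper exponent $n^{d-1}$ in parts (i) and (ii), and the leading constant $2+o(1)$ when $d=2$, I would instead run a more hands-on argument that uses the one-dimensional Erd\H{o}s--Szekeres bound directly and peels the directions one at a time. To handle the last direction, each of the at most $N^{d-1}$ lines in that direction is a length-$N$ sequence, hence has a monotone subsequence of length $\sqrt N$; after pigeonholing on orientation I would try to extract a common index set of size $m$ in that direction on which all retained lines are monotone and consistently oriented, and then recurse in dimension $d-1$ on what remains. The aim in two dimensions is to organise the selections in the two directions so that the total loss is two square-root applications --- one per direction --- plus lower-order terms, yielding $\log\log M_2(n)\le (2+o(1))n$, and similarly $\log\log M_3(n)\le (2+o(1))n^2$ in three dimensions.

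The main obstacle is precisely this alignment step: passing from ``each line has its own monotone index set'' to ``one index set serves all the lines we retain'' is, done carelessly, a square root \emph{per line aligned}, so na\"ively aligning over all $\sim n^{d-1}$ lines of the target box --- or, worse, demanding that the common index set persist across a whole sub-box of the remaining coordinates --- costs a further exponential (indeed tower) level. That is exactly the source of Fishburn and Graham's original tower-of-height-$d$ bound and of the extra level remaining in part (iii). The work in parts (i) and (ii) is to interleave the selections across the two (respectively three) directions so that this loss is absorbed into a single doubly exponential bound with the claimed exponent and constant; by contrast, the reduction to \Cref{thm:mainRamsey} in the first two paragraphs is essentially free once the axis-parallel structure of copies of $\ourbox^{d}K_n$ is in hand, though that structural fact, while standard, must be stated with care because of the degenerate behaviour at $n\le 2$.
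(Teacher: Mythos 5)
First, a point of orientation: this statement is not proved in the paper at all --- it is the theorem of Buci\'c, Sudakov and Tran, quoted from \cite{Sudakov} as background --- so there is no ``paper proof'' to compare against, only the paper's passing remarks (the observation in Section~\ref{Sec:Mn} that \Cref{thm:mainRamsey} immediately gives $M_d(n)\le 2^{2^{C_dn^{d}}}$, and the remark in Section~\ref{sec:Conc} that $M_2(n)\le R_2(2,n)$). Your first two paragraphs are correct and match those remarks: the red/blue colouring by increase/decrease, together with the observation that a copy of $\ourbox^{d}K_n$ in $\ourbox^{d}K_N$ with $n\ge 3$ must be an axis-parallel box (cliques on three or more vertices lie in a single fibre), gives $M_d(n)\le R_2(d,n)\le 2^{2^{O_d(n^{d})}}$. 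Since $n^{d}\le 2^{O_d(n^{d-1})}$, this does establish part~(iii), and indeed a stronger doubly exponential bound.

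The genuine gap is in parts~(i) and~(ii). The quoted theorem asserts the exponent $n^{d-1}$ rather than $n^{d}$, and, more stringently, the precise leading constant $2+o(1)$ in that exponent. Your reduction to \Cref{thm:mainRamsey} cannot deliver either: it loses a full factor of $n$ in the exponent (because each Ramsey step inside the product costs an exponential, whereas the one-dimensional Erd\H{o}s--Szekeres theorem costs only a square root), and it says nothing about the constant. For these parts you offer only a plan --- peel off one direction at a time, extract monotone index sets line by line, and ``interleave the selections'' so that the alignment cost is absorbed --- and you yourself correctly identify the alignment step (passing from a monotone index set per line to a single index set serving all retained lines) as the place where a na\"ive argument blows up to a tower. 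Identifying the obstacle is not the same as overcoming it: as written, there is no argument that bounds the cost of aligning the $\sim n^{d-1}$ lines by a single doubly exponential quantity with exponent $(2+o(1))n^{d-1}$, and this is exactly the technical content of the Buci\'c--Sudakov--Tran proof (and, in a different guise, of Lemmas~\ref{lem: 1*} and~\ref{lem: 2*} in this paper, which achieve the exponent $n^{d-1}$ but not the constant $2$). So parts~(i) and~(ii) remain unproved in your proposal.
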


Buci\'c, Sudakov, and Tran~\cite{Sudakov} asked whether a better bound could be proved in four and higher dimensions, speculating that the triply exponential bound could be reduced to a doubly exponential bound.
Using the methods from our proof of \Cref{thm:mainRamsey}, we resolve their question, showing that a doubly exponential upper bound holds in all dimensions. 

\begin{theorem}\label{thm: mainES}
For every $d\geq 2$, there is $C_d>0$, such that for every positive $n$, $M_d(n)\leq 2^{n^{C_dn^{d-1}}}$.
\end{theorem}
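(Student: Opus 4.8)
The plan is to derive the multidimensional Erdős–Szekeres bound from the multidimensional Ramsey theorem (\Cref{thm:mainRamsey}) by the standard reduction that converts a monotone-subarray question into a direction-monochromatic-subgraph question, while being careful to lose only a doubly exponential factor rather than a tower. Given a $d$-dimensional array $f$ on $[N]^d$, I would define an $r$-colouring of the edges of $\ourbox^{d}K_N$ as follows: an edge $vw$ in direction $i$ joins two points $v=(v_1,\dots,v_d)$ and $w=(w_1,\dots,w_d)$ differing only in coordinate $i$, say $v_i<w_i$; colour this edge by the sign of $f(w)-f(v)$, i.e.\ by whether $f$ increases or decreases as we move up in coordinate $i$ from $v$ to $w$. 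This is a $2$-colouring, so $r=2$. A copy of $\ourbox^{d}K_n$ that is monochromatic in every direction is exactly a sub-box $A_1\times\cdots\times A_d$ with $|A_i|=n$ on which, for each $i$, every $1$-dimensional subarray in direction $i$ is monotone with the \emph{same} direction of monotonicity — in other words, a monotone subarray of size $[n]^d$. Applying \Cref{thm:mainRamsey} with $r=2$ immediately gives $M_d(n)\le R_2(d,n)\le 2^{2^{C_d' n^d}}$ for a suitable constant $C_d'$, which is already doubly exponential.

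This naive reduction, however, gives an exponent $n^d$, whereas the target in \Cref{thm: mainES} is $n^{d-1}$. To gain the extra factor of $n$ in the exponent I would peel off one dimension before invoking the Ramsey machinery: treat $f$ on $[N]^d$ as a $1$-dimensional family (indexed by the last coordinate $x_d\in[N]$) of $(d-1)$-dimensional arrays $f_{x_d}$ on $[N]^{d-1}$. First apply the $(d-1)$-dimensional colouring above to each slice simultaneously: colour each edge of $\ourbox^{d-1}K_N$ in direction $i<d$ by the vector of signs $\bigl(\mathrm{sgn}(f_{x_d}(w)-f_{x_d}(v))\bigr)_{x_d\in[N]}$. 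That is far too many colours, so instead one should do this in two stages. In the first stage, pass to a large monotone sub-box for the $(d-1)$-dimensional problem but only demanding consistency of the monotonicity pattern across a controlled set of slices — concretely, one runs the proof of \Cref{thm:mainRamsey} itself on the $(d-1)$-dimensional product, which is where the $n^{d-1}$ exponent naturally appears, and then handles the monotonicity in the $d$-th direction by a single one-dimensional Erdős–Szekeres / Ramsey step on top. The cleanest way to phrase this: use the same inductive scheme that proves \Cref{thm:mainRamsey}, but observe that the final coordinate only needs a one-dimensional monotone subsequence of length $n$ among $N$ reals, which costs a factor $n^2$ additively in a tower level rather than multiplicatively, so the dominant contribution to the double-exponent is $n^{d-1}$ from the lower-dimensional part. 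I would make this precise by stating an auxiliary lemma — essentially the engine of \Cref{thm:mainRamsey} — in a form that outputs, from $\ourbox^{d-1}K_N$, a direction-monochromatic $\ourbox^{d-1}K_m$ with $m$ still doubly exponentially large in terms of $N$ and exponent $n^{d-1}$, and then apply one-dimensional Erdős–Szekeres to the $d$-th coordinate to cut $m$ down to $n$.

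The main obstacle is precisely this bookkeeping of the exponent: one must verify that handling the extra ($d$-th) dimension can be done with only a polynomial-in-$n$ loss inside a single exponential level, rather than incurring another full exponentiation, so that the bound remains $2^{2^{C_d n^{d-1}}}$ and not $2^{2^{2^{\cdots}}}$ or $2^{2^{C_d n^{d}}}$. Concretely, the danger is that after fixing the monotonicity pattern on the first $d-1$ directions one is left with only a doubly-exponentially-small set of surviving slices in the last direction, which would force yet another exponential; the resolution is to interleave the slice-selection with the lower-dimensional argument rather than doing it afterwards, exactly as in the proof of \Cref{thm:mainRamsey}, so that the last direction is absorbed into the "base case" of the recursion. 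I would therefore structure the proof by isolating the quantitative statement extracted from the proof of \Cref{thm:mainRamsey} (with the exponent expressed as $C_d r n^{d}$, and noting that for the last, one-dimensional, direction the relevant quantity is the trivial $n^2+1$ Erdős–Szekeres bound), setting $r=2$, and tracking that the total tower height stays at two and the top exponent at $O_d(n^{d-1})$. The routine parts — choosing constants $C_d$, verifying monotonicity of the surviving sub-box, and confirming that all inequalities close — I would relegate to a short computation at the end.
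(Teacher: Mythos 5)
Your first paragraph is exactly the reduction the paper notes at the start of Section \ref{Sec:Mn} (sign-colour each edge by whether $f$ increases or decreases along it, so that a box monochromatic in every direction is a monotone subarray), and your diagnosis of where the extra factor of $n$ must come from is also correct: the $n^{d}$ in \Cref{thm:mainRamsey} arises because each one-dimensional step uses Ramsey, so a fibre needs $u=r^{rn}$ candidate vertices and one pigeonholes over $\binom{u}{n}\approx r^{rn^{2}}$ positions, whereas for arrays the one-dimensional step is Erd\H{o}s--Szekeres with $u=n^{2}$ and only $\binom{u}{n}\le n^{2n}$ positions. The paper's Section \ref{Sec:Mn} is precisely the ``interleaved'' plan you describe: it reruns the \Cref{thm:mainRamsey} machinery in array language, via a consistency lemma (\Cref{lem: 2*}) and a density-extraction lemma (\Cref{lem: 1*}), with every one-dimensional Ramsey application replaced by Erd\H{o}s--Szekeres. (In fact, for the first $d-1$ directions your sign-colouring with $r=2$ together with \Cref{lem: 1,lem: 2} would already suffice, since $r^{g(d-1)rn^{d-1}}=2^{O(n^{d-1})}$ when $r=2$; the array versions are only an optimisation there. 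The place where Erd\H{o}s--Szekeres is genuinely indispensable is the last coordinate.)

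The gap is that the decisive step is never specified, and the one concrete ordering you commit to at the end is backwards. You propose to first extract a direction-monochromatic $\ourbox^{d-1}K_m$ and \emph{then} apply one-dimensional Erd\H{o}s--Szekeres to the $d$-th coordinate to cut down to $n$. In that order you must find a single position set $A$ of size $n$ in the last coordinate on which all $n^{d-1}$ fibres over the chosen box are simultaneously monotone \emph{and in the same direction}; this needs a separate argument (e.g.\ iterated Erd\H{o}s--Szekeres over the fibres, plus a further selection to equalise the directions of monotonicity, which lands you back needing a density-extraction step) that you do not supply. The working order is the reverse: first pass to a $d$-consistent box so that all slices share an order pattern, then restrict the last coordinate to $u=n^{2}$ values, apply one-dimensional Erd\H{o}s--Szekeres to each fibre $\mathbf a\times[u]$ and pigeonhole over the $2\binom{u}{n}$ outcomes to obtain a common position set and direction on a \emph{dense subset} $S$ of $[t]^{d-1}$, and only then run the $(d-1)$-dimensional extraction \emph{inside} $S$. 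This is exactly why \Cref{lem: 1*} (and \Cref{lem: 1}) must be stated with a density parameter $\epsilon$ --- every $\epsilon$-fraction of a consistent box contains a monotone $[n]^{d-1}$ --- a formulation your sketch never isolates, even though the inequality $t\ge\hat\epsilon^{-g(d-1)n^{d-2}}\cdot n^{g(d-1)n^{d-2}}$ with $\hat\epsilon^{-1}=2\binom{n^{2}}{n}$ is where the exponent $n^{d-1}$ is actually secured. Without that lemma, the ``short computation at the end'' has nothing to compute with.
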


In our proof, instead of trying to find directly a monochromatic sub-product of cliques, we first find an intermediate structured object which we call \textit{$d$-consistent} and then we show that a density-type argument on these structured objects allows us to get a monochromatic $\ourbox^{d}\!K_n$. We then use this new approach to improve the upper bound in Theorem \ref{bst}, leading to \Cref{thm: mainES}.
This differs from the methods of \cite{Sudakov}, which makes significant use of the
fact that the Erd\H{o}s-Szekeres Theorem has a polynomial bound and so it can be iterated $n^{d-1}$ times to obtain the bounds for $d\ge4$ in Theorem~\ref{bst}. Ramsey's Theorem, however, has an exponential bound 
and so a different approach is required to avoid getting levels of exponentiation.

In our proof, we first find a new intermediate notion of a structured object which we call \textit{$d$-consistent} and then we show that a density-type argument on these structured objects allows us to get a monochromatic $\ourbox^{d}\!K_n$. We then use this new approach to improve the upper bound in Theorem \ref{bst}, leading to \Cref{thm: mainES}.


We finish the introduction by pointing out that our results also give improvements for multidimensional lexicographic-monotone array.  
In their paper, Fishburn and Graham \cite{Fishburn} introduced another natural generalisation for monotone sequences and the Erd\H{o}s-Szekeres theorem, which they called a \textit{lex-monotone array}.
A $d$-dimensional array $f$ is said to be \textit{lex-monotone}  if the following holds. There exists a permutation $\tau\in S_{[d]}$ and a sign vector $s\in \{-1,1\}^d$, such that $f(\textbf{x})<f(\textbf{y})$ if and only if the vector $(s_{\tau(i)}x_{\tau(i)})_{i\in [d]}$ is smaller in lexicographic ordering than vector $(s_{\tau(i)}y_{\tau(i)})_{i\in [d]}$, that is, if there is $i\in [d]$ such that $s_{\tau(j)}x_{\tau(j)}=s_{\tau(j)}y_{\tau(j)}$ for every $j<i$, and $s_{\tau(i)}x_{\tau(i)}<s_{\tau(i)}y_{\tau(i)}$.
Let $L_d(n)$ be the smallest $N$ such that a $d$-dimensional array on $[N]^{d}$ contains a lex-monotone $d$-dimensional subarray of size $[n]^d$.
Fishburn and Graham gave a tower bound of order $d-1$ on $L_d(n)$, which was improved by Buci\'c, Sudakov and Tran to a tower of order 5 for $d\geq 4$ (and triple exponential for $d=3$). We note that using \Cref{thm: mainES} together with Theorem 1.2 from \cite{Sudakov} we obtain a triple exponential upper bound for all $d\geq 3$, that is, $L_d(n)\leq 2^{2^{2^{C_dn^{d-2}}}}$. 

We prove Theorem \ref{thm:mainRamsey} in Section \ref{Sec:Rdn} and Theorem \ref{thm: mainES} in Section \ref{Sec:Mn}.  We conclude with some further discussion in Section \ref{sec:Conc}.

\section{Upper bound on $R_r(d,n)$}\label{Sec:Rdn}
To simplify notation we will identify the vertex set of $K_N$ with $[N]=\{1,\dots,N\}$ and the vertex set of $K_N\Osq \cdots \Osq K_N$ with $[N]^d$.  Suppose that, for each $i\in[d]$ we have a graph $G_i$ and a subgraph $H_i\subseteq G_i$. For $a\in V(G_d)$,  we write $H_1\Osq\dots\Osq H_{d-1}\Osq a$ for the copy of $H_1\Osq\dots\Osq H_{d-1}\subseteq G_1\Osq\dots\Osq G_{d}$ with vertex set $V(H_1)\times\dots\times V(H_{d-1})\times \{a\}$ (thus all vertices have $a$ as their $d$th coordinate; we will usually omit the braces around $\{a\}$ for simplicity). We define subgraphs such as $a_1\Osq \cdots\Osq a_{d-1}\Osq H_{d}$ analogously.
We will sometimes refer to induced subgraphs by their vertex sets: thus, for an edge-coloured graph $G$ we  say that $S\subset V(G)$ {\em contains a monochromatic $K_k$} if there is some set $T\subset S$ such that the induced subgraph $G[T]$ is a monochromatic copy of $K_k$.

We begin with a short proof for the case $d=2$, and then give a (more involved) argument for the general case $d\geq 3$.

\begin{proof}[Proof of Theorem~\ref{thm:mainRamsey} for $d=2$] Note that
$R_r(1,t)$ is just the usual Ramsey 
number for $K_t$, which is smaller than $r^{rt}$.
Let $N\coloneqq r^{r^{10rn^2}}$, and consider an $r$-colouring of the edges of $K_N\Osq K_N$.
Fix a set $S\subset [N]$ of size $r^{r^{3rn^2+1}}$. By Ramsey's Theorem, for each $i\in [N]$ we can find a monochromatic copy of $K_{r^{3rn^2}}$ in $S\times i$. Note that there are at most $r\cdot \binom{|S|}{ {r^{3rn^2}}} \leq r^{r^{7rn^2}}$ choices for the vertex set of each monochromatic copy of $K_{r^{3rn^2}}$ and its colour.
Since ${N}/{r^{r^{7rn^2}}}\geq r^{2rn}$, a pigeonhole argument shows that there is 
a set $A_1$ of size $r^{3rn^2}$ and a set $A_2\subset [N]$ of size at least $r^{2rn}$ such that, for each $a_2\in A_2$
the set $A_1\times a_2$ induces a monochromatic copy of $K_{r^{3rn^2}}$ (all with the same choice of colour). 
Applying Ramsey's Theorem once again, we can find in each set
$b\times A_2$ a monochromatic copy of $K_n$ (as 
$|A_2|=r^{2rn}$). As there are at most $r\cdot \binom{|A_2|}{n}\leq r^{2rn^2}$ choices for the vertex set and colour of this $K_n$, we can apply the pigeonhole principle again: there is a set 
$A'_1\subset A_1$ of size at least $|A_1|/r^{2rn^2}\geq k$ 
and a set $A_2'\subset A_2$ of size $n$ such that, for every $a_1\in A'_1$, the set $a_1\times A'_2$
forms a monochromatic copy of $K_n$ (and all with the same choice of colour). 
It is clear that $A'_1\times A'_2$ forms a copy of $K_n\Osq K_n$ that is monochromatic in both directions. Hence $R_r(2,n)\leq N$, as we wanted to show. 
\end{proof}

We now turn to the general argument for $d\ge3$.  Consider an 
$r$-edge-coloured product of complete graphs.  We say a $1$-dimensional $K_k$ is $1$-\textit{consistent} if it is monochromatic. For $d\geq 2$, we say $\ourbox^{d}\!K_k$ is $d$-\textit{consistent} if the following two conditions hold:
\begin{itemize}
\item for every $a\in [k]$, the subgraph $(\ourbox^{d-1}\!K_k)\Osq a$ has the same colouring: that is, for every edge $\textbf{x}\textbf{y}$ in $\ourbox^{d-1}\!K_k$, the colour of the edge between $\textbf{x}\times a$ and $\textbf{y}\times a$ is the same for every $a\in [k]$; and 
\item for some (and thus for every) $a$, the subgraph $\ourbox^{d-1}\!K_k\Osq a$ is $(d-1)$-\textit{consistent}. 
\end{itemize}
In other words, for each $i\in[d]$, the `$i$-dimensional subspaces' $(\ourbox^i K_N)\Osq a_{i+1}\Osq\cdots\Osq a_d$ have the same colouring for every choice of $a_{i+1},\dots,a_d$.

The proof splits into two lemmas. Given suitable $N\gg k\gg n$ and an $r$-edge colouring of $\ourbox^d\!K_N$, we first find a  $d$-\textit{consistent}  subgraph $H=\ourbox^{d}\!K_k$. We then show that $H$ contains a monochromatic copy of $\ourbox^{d}\!K_n$.

\begin{lemma}\label{lem: 1}
For every $d\ge1$ there is a constant $g(d)$ such that the following holds.  Let $r,k$ be positive integers and $0<\epsilon<1/2$. Suppose that 
$N\geq \epsilon^{-g(d)k^{d-1}}\cdot r^{g(d)rk^{d}}$ and $\ourbox^d K_N$ has a $d$-consistent $r$-edge-colouring. Then every set $S$ of at least $\epsilon N^{d}$ vertices contains a copy of $\ourbox^{d}\!K_k$ which is monochromatic in every direction. 
\end{lemma}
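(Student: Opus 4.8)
\medskip
\noindent\textbf{Proof proposal.} The plan is to prove the lemma by induction on $d$, defining $g(d)$ recursively in terms of $g(d-1)$ and absolute constants. The base case $d=1$ is immediate: a $1$-consistent colouring of $K_N$ is monochromatic and the hypothesis on $N$ forces $\epsilon N\ge k$, so any $k$ vertices of $S$ span a monochromatic $K_k$. For the inductive step, write points of $[N]^{d}$ as $(\mathbf x,a)$ with $\mathbf x\in[N]^{d-1}$, $a\in[N]$; let $S_{\mathbf x}=\{a:(\mathbf x,a)\in S\}$ be the fibre over $\mathbf x$, and let $\phi_{\mathbf x}$ be the $r$-colouring of the $K_N$ on $\{\mathbf x\}\times[N]$ coming from the direction-$d$ edges through $\mathbf x$ (every direction-$d$ edge arises this way for a unique $\mathbf x$). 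By $d$-consistency, all slices $(\ourbox^{d-1}K_N)\Osq a$ carry one and the same $(d-1)$-consistent colouring, which we call $c'$. Finally, since $\sum_{\mathbf x}|S_{\mathbf x}|=|S|\ge\epsilon N^{d}$, at least $\tfrac\epsilon2 N^{d-1}$ of the fibres satisfy $|S_{\mathbf x}|\ge\tfrac\epsilon2 N$; call such $\mathbf x$ \emph{good}.

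The crux is a dimension-reducing step that handles direction $d$ while losing only an $\epsilon^{-O(k)}$ factor in density. Choose a uniformly random $P\subseteq[N]$ with $|P|=p:=\lceil 100\,R_r(k)/\epsilon\rceil$, where $R_r(k)\le r^{rk}$ is the ordinary Ramsey number. For a good $\mathbf x$ the size $|P\cap S_{\mathbf x}|$ is hypergeometric with mean at least $p\epsilon/2\ge 50\,R_r(k)$, so by a standard concentration bound $|P\cap S_{\mathbf x}|\ge R_r(k)$ with probability at least $\tfrac12$; hence some fixed $P$ achieves this for at least $\tfrac\epsilon4 N^{d-1}$ good $\mathbf x$. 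For each such $\mathbf x$, Ramsey's theorem applied to $\phi_{\mathbf x}$ restricted to $P\cap S_{\mathbf x}$ produces a monochromatic $K_k$; record its vertex set $U_{\mathbf x}\subseteq P$ and its colour $c_d(\mathbf x)$. There are at most $\binom{|P|}{k}r\le p^{k}r$ possible pairs $(U_{\mathbf x},c_d(\mathbf x))$, and since $p=O(R_r(k)/\epsilon)$ we have $p^{k}r\le r^{O(rk^{2})}\epsilon^{-O(k)}$; so by pigeonhole some pair $(A_d,c_d)$ is common to a set $G\subseteq[N]^{d-1}$ with $|G|\ge \epsilon'' N^{d-1}$, where $\epsilon''=\epsilon^{\,O(k)}r^{-O(rk^{2})}$. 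By construction, every $\mathbf x\in G$ has $A_d\subseteq S_{\mathbf x}$ and $\phi_{\mathbf x}$ constantly equal to $c_d$ on $\binom{A_d}{2}$.

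Now apply the inductive hypothesis to the dense set $G$ inside the $(d-1)$-consistent colouring $c'$ of $\ourbox^{d-1}K_N$: provided $N\ge(\epsilon'')^{-g(d-1)k^{d-2}}r^{g(d-1)rk^{d-1}}$ (and $N\ge p$, which is much weaker), $G$ contains a copy $A_1\times\cdots\times A_{d-1}$ of $\ourbox^{d-1}K_k$ that is $c'$-monochromatic in every direction, say with direction colours $c_1,\dots,c_{d-1}$. Then $A_1\times\cdots\times A_{d-1}\times A_d$ is the desired copy: it lies in $S$ because each $\mathbf x\in A_1\times\cdots\times A_{d-1}\subseteq G$ has $A_d\subseteq S_{\mathbf x}$; for $i\le d-1$ its direction-$i$ edges lie in slices $(\ourbox^{d-1}K_N)\Osq a$ with $a\in A_d$, all carrying the colouring $c'$, hence all have colour $c_i$; and its direction-$d$ edges are edges of the colourings $\phi_{\mathbf x}$ for $\mathbf x\in A_1\times\cdots\times A_{d-1}\subseteq G$ restricted to $\binom{A_d}{2}$, hence all have colour $c_d$. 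Substituting the value of $\epsilon''$ into the requirement on $N$ yields $N\ge\epsilon^{-g(d)k^{d-1}}r^{g(d)rk^{d}}$ for an appropriate $g(d)$ expressed in terms of $g(d-1)$, which closes the induction.

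The step I expect to be the main obstacle --- and where a naive argument fails --- is the direction-$d$ reduction. Probing the whole of $[N]$ is useless: then the sets $U_{\mathbf x}$ range over $\binom{[N]}{k}$ and the pigeonhole loss $\binom{N}{k}$ dwarfs $N^{d-1}$. Requiring instead that a Ramsey-sized probe $P$ lie entirely inside the fibres costs a factor $\epsilon^{-|P|}=\epsilon^{-R_r(k)}$ in density, which blows the final bound out of the water. The bounded random probe of size $\Theta(R_r(k)/\epsilon)$ threads the needle: it meets almost every good fibre in at least $R_r(k)$ points, so Ramsey's theorem applies inside $P\cap S_{\mathbf x}$, yet it is small enough that the ensuing pigeonhole over $\binom{P}{k}\times[r]$ costs only $\epsilon^{-O(k)}r^{O(rk^{2})}$ --- which is exactly what the target bound can absorb, and what makes the $\epsilon^{-k^{d-1}}$ dependence emerge after the $d$ recursive steps.
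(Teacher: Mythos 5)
Your proposal is correct and follows essentially the same route as the paper's proof: the same induction on $d$, the same bounded random probe of size $\Theta(R_r(k)/\epsilon)$ into the fibres in direction $d$, the same pigeonhole over $\binom{P}{k}\times[r]$ to fix a common vertex set $A_d$ and colour $c_d$, and the same application of the inductive hypothesis to the resulting dense subset of $[N]^{d-1}$ using $d$-consistency. The only (harmless) deviation is the base case, where you use $1$-consistency directly rather than invoking Ramsey's theorem.
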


\begin{proof}
We argue by induction on $d$ that $g(d)=(12+2r)^{d-1}$ will do. It is clear that when $d=1$ it is sufficient to have $N\geq r^{rk}/\epsilon$, which we do as $g(1)=1$. So we assume that $d\geq 2$ and we have handled smaller cases. Let $T$ be the set of elements ${\mathbf v}\in[N]^{d-1}$ such that ${\mathbf v}\times[N]$ contains at least $\epsilon N/2$ elements of $S$. A counting argument shows that $|T|\ge \epsilon N^{d-1}/2$.
Let $A\subset[N]$ be a random subset of size $(10/\epsilon)r^{rk}$. For each ${\mathbf v}\in T$, with probability at least $2/3$ the set $\mathbf v\times A$ contains at least $r^{rk}$ elements of $S$. So we can choose $A$ so that the set
    $$T':=\{ {\mathbf v}\in T: |({\mathbf v}\times A)\cap S|\ge r^{rk}\}$$
has size at least $(2/3)|T|\ge(\epsilon/3)N^{d-1}$.
     By Ramsey's Theorem, for each ${\mathbf v}\in T'$, there is a monochromatic copy of $K_k$ contained in 
$({\mathbf v}\times A)\cap S$, 
say on vertices $B_{\mathbf v}\subset A$. There are at most $r\binom{(10/\epsilon)r^{rk}}{k}\leq \epsilon^{-9k}r^{rk^2+1}$ 
     choices for $B_{\mathbf v}$ and the colour of the corresponding copy of $K_k$, so there are $B\subset A$ and $U\subset T'$ such that
$$|U|\ge \epsilon^{9k}r^{-rk^2-1}|T'| \ge (\epsilon/3)\epsilon^{9k}r^{-rk^2-1}N^{d-1}$$
and $B_{\mathbf v}=B$ for every $\mathbf v\in U$.  
Now let $\tilde{\epsilon}=(\epsilon/3)\epsilon^{9k}r^{-rk^2-1}\ge \epsilon^{12k}r^{-rk^2-1}$, so $|U|\geq \tilde{\epsilon}N^{d-1}$. We now apply the inductive hypothesis to $[N]^{d-1}$, with the colouring inherited from $[N]^{d-1}\times a$ (which by consistency is the same for any $b\in B$) with the subset $U$ playing the role of $S$.
Since $N\geq \epsilon^{-g(d)k^{d-1}}r^{g(d)rk^d}
\geq \tilde{\epsilon}^{-g(d-1)k^{d-2}}\cdot r^{g(d-1)rk^{d-1}}$, 
we obtain a $(d-1)$-dimensional product $P$ of copies of $K_k$ which is monochromatic in every direction.  Then $P\times B$ gives a $d$-dimensional product of copies of $K_k$ which is monochromatic in every direction (as $P\times b$ is coloured in the same way for every $b\in B$, and all sets $\textbf{v}\times B$ ($\textbf{v}\in B)$ give monochromatic copies of $K_k$ with the same colour).
\end{proof}

\begin{lemma}\label{lem: 2}
For every positive integer $d$ there is $f(d)$ such that for every $r,k$ the following holds. Let $N\coloneqq r^{f(d)rk^{d}}$. Then in every $r$-colouring of $\ourbox^{d}\!K_N$ there is a $d$-consistent $\ourbox^{d}\!K_k$. 
\end{lemma}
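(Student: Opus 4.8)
The plan is to induct on $d$, reducing the construction of a $d$-consistent $\ourbox^d K_k$ in $\ourbox^d K_N$ to the construction of a $(d-1)$-consistent subgraph in a product of fewer copies, at the cost of one extra exponential per dimension. The base case $d=1$ is just Ramsey's theorem: inside $K_N$ with $N = r^{f(1)rk}$ we find a monochromatic $K_k$, so $f(1)=1$ works. For the inductive step, suppose $d\ge 2$ and set $N = r^{f(d)rk^d}$. Think of $\ourbox^d K_N$ as a stack of $N$ layers $\ourbox^{d-1}K_N \Osq a$, for $a\in[N]$, together with the ``vertical'' edges in direction $d$. Each layer carries an $r$-colouring of $\ourbox^{d-1}K_N$; by the inductive hypothesis, provided $N$ is at least $r^{f(d-1)rk^d}$ (which it is, for $f(d)$ large enough relative to $f(d-1)$), each layer contains a $(d-1)$-consistent copy of $\ourbox^{d-1}K_{k'}$ for a suitable intermediate $k' \gg k$ to be fixed below.

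The first key step is to make all layers \emph{agree}. The colouring of a $(d-1)$-consistent $\ourbox^{d-1}K_{k'}$ is determined by a bounded amount of data: the vertex set it sits on (a $(d-1)$-tuple of $k'$-subsets of $[N]$) together with the colour pattern of a $(d-1)$-consistent colouring, of which there are at most $r^{(d-1)}$ many essentially (one colour per direction, after quotienting — more carefully, the number of $(d-1)$-consistent colour patterns on $\ourbox^{d-1}K_{k'}$ is at most $r^{k'^{d-2}\cdots}$, still singly exponential in a power of $k'$). The number of possible vertex sets is at most $\binom{N}{k'}^{d-1} \le N^{(d-1)k'}$. So by pigeonhole over the $N$ layers, some set $A\subseteq[N]$ of size $N / N^{(d-1)k'} \cdot (\text{colour-pattern factor})^{-1}$ has the property that for every $a\in A$, the layer $\ourbox^{d-1}K_N\Osq a$ contains a $(d-1)$-consistent $\ourbox^{d-1}K_{k'}$ \emph{on the same vertex set and with the same colour pattern}. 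Here is where the extra exponential enters: to keep $|A|$ large (say $|A|\ge r^{f(d-1)rk^{?}}$, enough to run the next Ramsey step), we need $N$ roughly exponential in $k' \cdot (\text{something})$, and since $k'$ itself must be chosen exponential in a power of $k$ (see the next step), $N$ ends up doubly exponential in a power of $k$ as a function of $k$ — consistent with $f(d) = $ (constant)$\cdot f(d-1)$ growing like a tower only if we are not careful, so the real content is that one Ramsey application suffices per level, not a tower.

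The second key step handles the vertical direction. We now have a fixed copy $Q = \ourbox^{d-1}K_{k'}$, with vertex set $W\subseteq[N]^{d-1}$, sitting $(d-1)$-consistently inside every layer indexed by $a\in A$; the edges within each such layer are thus ``frozen'' into a known pattern. What remains is to control, for each vertex $\mathbf w\in W$, the colouring of the vertical clique $\mathbf w\Osq A$ (a copy of $K_{|A|}$ in direction $d$), and to make these patterns coherent across $\mathbf w\in W$. Apply Ramsey's theorem in $\mathbf w \Osq A$ for one fixed $\mathbf w$ to get a monochromatic $K_{k''}$ on a subset $A'\subseteq A$; but we need the \emph{same} $A'$ to work for all $\mathbf w\in W$ simultaneously, so instead colour each vertical pair $\{a,a'\}\subseteq A$ by the $|W|$-tuple of colours it receives across the $|W|$ vertices $\mathbf w$ — this is a colouring of $K_{|A|}$ with $r^{|W|} = r^{k'^{d-1}}$ colours. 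Applying Ramsey's theorem to this product colouring yields $A'\subseteq A$ of size $k$ on which \emph{every} vertical clique $\mathbf w\Osq A'$ is monochromatic, and moreover all these monochromatic cliques have colours forming a single pattern independent of the choice of representative in $A'$ (that is precisely what the product colouring guarantees). Finally, restrict $Q$ to a sub-product $\ourbox^{d-1}K_k$ on which the layer colouring is still $(d-1)$-consistent — it automatically is, being an induced sub-product of a $(d-1)$-consistent object — and take the product with $A'$. The resulting $\ourbox^d K_k$ satisfies both clauses of $d$-consistency: every layer over $A'$ has the same (frozen) colour pattern by construction of $A$, and the vertical direction is monochromatic-and-coherent by construction of $A'$.

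The main obstacle is the bookkeeping of the two pigeonhole/Ramsey budgets so that the final bound is $r^{f(d)rk^d}$ with $f(d)$ merely a constant depending on $d$ (not a tower): one must choose the intermediate clique size $k'$ large enough that after the vertical Ramsey step with $r^{k'^{d-1}}$ colours we still land on a clique of size $\ge k$, which forces $k' = 2^{\Theta(k^{d-1}\log r)}$-ish, and then verify that the first-step pigeonhole only costs a further single exponential, so that $N = r^{f(d)rk^d}$ suffices with $f(d) = O_d(1) \cdot f(d-1)$. Keeping the Ramsey-number bound $R_r(k)\le r^{rk}$ in play throughout, and being slightly generous with constants, this should close cleanly; the delicate point to get right is that applying Ramsey with $r^{k'^{d-1}}$ colours does \emph{not} blow the bound up by another exponential in $k$, because $\log(r^{k'^{d-1}}) = k'^{d-1}\log r$ and we have budgeted $k'$ accordingly.
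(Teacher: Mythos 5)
Your proposal contains two genuine problems, one of which is fatal to the claimed bound. First, the ``second key step'' (controlling the vertical cliques $\mathbf w\Osq A$) is both unnecessary and too expensive. It is unnecessary because the definition of $d$-consistency places \emph{no} constraint on the direction-$d$ edges: it only requires that the layers $(\ourbox^{d-1}\!K_k)\Osq a$ carry identical colour patterns and are recursively consistent. Control of the last direction is deliberately deferred to Lemma~\ref{lem: 1}, where it is achieved by applying $r$-colour Ramsey to each line separately and then pigeonholing over the $r\binom{|A|}{k}$ outcomes (accepting that only a positive fraction of lines agree, which is why that lemma works with an $\epsilon$-fraction set $S$). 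It is too expensive because your product colouring uses $q=r^{k^{d-1}}$ colours, and the multicolour Ramsey bound is $R_q(k)\le q^{qk}$ --- the number of colours sits in the \emph{exponent}, not under a logarithm as your final paragraph assumes. You would therefore need $|A|\ge r^{k^{d}\cdot r^{k^{d-1}}}$, forcing $\log_r N$ to be exponential in $k^{d-1}$ rather than $O(rk^d)$; this extra exponential propagates into Theorem~\ref{thm:mainRamsey} (where Lemma~\ref{lem: 2} is applied with $k=t$ already exponential in $n^d$) and destroys the doubly exponential bound. Multicolour Ramsey numbers genuinely grow at least exponentially in the number of colours, so this cannot be patched within your framework.

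Second, your pigeonhole over the $N$ layers does not work as written. If you apply the inductive hypothesis inside the full layer $[N]^{d-1}\Osq a$, the number of possible vertex sets for the resulting consistent box is $\binom{N}{k'}^{d-1}\approx N^{(d-1)k'}$, which vastly exceeds $N$; your own formula $N/N^{(d-1)k'}$ for $|A|$ is less than $1$. The paper's fix is to first restrict the first $d-1$ coordinates to a set $[M]$ with $M=r^{f(d-1)rk^{d-1}}$ much smaller than $N$, apply induction inside $\ourbox^{d-1}\!K_M\Osq a$ for each $a\in[N]$, and only then pigeonhole: the number of (vertex set, colour pattern) classes is at most $r^{(d-1)k^{d}}M^{(d-1)k}<N/k$, so some class contains $k$ layers, and one needs nothing further. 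With that restriction in place and the vertical step deleted, your argument collapses to the paper's proof; as it stands, neither the pigeonhole nor the final bound goes through.
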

\begin{proof}
We argue by induction on $d$ that $f(d)=2^{d-1}d!$ will do. For $d=1$, this is true by Ramsey's Theorem. 
Now, let $M\coloneqq r^{f(d-1)rk^{d-1}}$ and consider the subgraph $(\ourbox^{d-1}\!K_{M})\Osq K_N$. By induction any $r$-colouring of $\ourbox^{d-1}\!K_{M}$ contains a $(d-1)$-consistent $\ourbox^{d-1}\!K_k$. Therefore, for every $a\in [N]$ the subgraph $\ourbox^{d-1}\!K_{M}\times a$ contains a $(d-1)$-consistent $\ourbox^{d-1}\!K_k\times a $.
 As there are most $r^{(d-1)k^{2+(d-2)}}$ possible $r$-colourings of $\ourbox^{d-1}K_k$ and at most $\binom{M}{k}^{d-1}$ possible vertex sets, 
 there are at most $r^{(d-1)k^{2+(d-2)}}\binom{M}{k}^{d-1}\le r^{(d-1)k^d}M^{k(d-1)}/k< r^{f(d)rk^d}/k=N/k$ possible combinations.
Thus, by the pigeonhole principle, there is a $(d-1)$-consistent colour pattern $c$ of $\ourbox^{d-1}K_k$ and a set $A\subset [N]$ of size $k$, such that for every $a\in A$, the subgraph $\ourbox^{d-1}\!K_k \times a$ has colour pattern $c$, and all these boxes lie on the same vertex set in the first $d-1$ dimensions, as we wanted to show. 
\end{proof}

\begin{proof}[Proof of Theorem~\ref{thm:mainRamsey}]
We will show that the statement holds with $C_d=3dg(d-1)+f(d)+1$, where $f$ and $g$ are
any functions satisfying the previous two lemmas.
Let $N\coloneqq r^{r^{C_drn^{d}}}$. Let $t\coloneqq r^{3g(d-1)rn^{d}}$ and $u\coloneqq r^{rn}$. 
Applying Lemma~\ref{lem: 2} to $\ourbox^{d}\!K_{N}$, we obtain a $d$-consistent copy $B$ of $\ourbox^{d}\!K_t$. Relabelling, and restricting the final coordinate to $u$ choices, we may assume that we have a $d$-consistent colouring of $\ourbox^{d-1}\!K_t\Osq K_u$. 

For every $a_1\times \dots \times a_{d-1}$, where each $a_i\in [t]$, we can apply Ramsey's Theorem to $a_1\Osq \dots \Osq a_{d-1}\Osq K_{u}$ to get a monochromatic copy of $K_n$. 
There are $\binom{u}{n}\cdot r$ choices of colour and coordinates $C$ for this copy; setting
$\epsilon=1/(ru^{n})$ we see that there is some set $S$ of at least $\epsilon t^{d-1}$ vertices in $\ourbox^{d-1}\!K_t$ for which both colour and coordinates agree.


Finally, we apply \Cref{lem: 1} to 
$S\subset\ourbox^{d-1}\!K_t$
and obtain a copy $K$ of $\ourbox^{d-1}\!K_n$ contained in $S$ that is monochromatic in every direction. This lemma holds for
$t\ge \epsilon^{-g(d-1)n^{d-2}}r^{g(d-1)rn^{d-1}}$, which holds by our choice of constants,
as 
$$\epsilon^{-g(d-1)n^{d-2}}
=(ru^n)^{g(d-1)n^{d-2}}
=r^{(rn^2+1)g(d-1)n^{d-2}}.$$
Then $K\Osq C$ is a copy of $\ourbox^{d}\!K_n$ that is monochromatic in every direction.
\end{proof}

\section{Upper bound on \texorpdfstring{$M_d(n)$}{Md(n)}}\label{Sec:Mn}

We note first that an immediate application of \Cref{thm:mainRamsey} gives an upper bound of $M_d(n)\leq 2^{2^{C_dn^{d}}}$ by giving an edge $xy$ colour red if $f(x)<f(y)$ and blue otherwise. However, we wish to get a dependence of $d-1$ in the exponent in \Cref{thm: mainES}, and so prove it separately.

The proof follows along the same route as the proof of Theorem~\ref{thm:mainRamsey}. 
However, we need an modified version of Lemma~\ref{lem: 1} where the exponent is slightly better. 
In what follows, , we will assume all the arrays hereafter are injective, which we can ensure by a small perturbation to the values. 
We say a $1$-dimensional array is \textit{consistent} if it is monotone. For $d\geq 2$, we say an array $f:[N]^d\rightarrow \mathbb{R}$ is $d$-consistent if 
the following two conditions hold:
\begin{itemize}
\item for every $a\in [N]$, $f$ restricted to $[N]^{d-1}\times \{a\}$ has the same order pattern i.e. for every $\textbf{x}, \textbf{y}\in [N]^{d-1}$, if $f(\textbf{x}\times a)< f(\textbf{y}\times a)$, for some $a$ then the same holds for all $a\in [N]$; and 
\item for some (and thus for every) $a$, the $(d-1)$-dimensional subarray $f:[N]^{d-1}\times a\rightarrow \mathbb{R}$ is $(d-1)$-\textit{consistent}. 
\end{itemize}

As in Section 2, we will show that being $d$-consistent with the appropriate parameters is enough for our purpose.

\begin{lemma}\label{lem: 1*}
For every $d\geq 1$, there is a constant $g(d)$ such that the following holds.  Let $r,k$ be positive integers and $0<\epsilon<1/2$ . Suppose that $N\geq \epsilon^{-g(d)k^{d-1}}\cdot k^{g(d)k^{d-1}}$ and the array $f:[N]^d \rightarrow \mathbb{R}$ is $d$-consistent. Then every  $S\subseteq [N]^d$ of size at least $\epsilon N^{d}$ contains a monotone subarray of size $[k]^{d}$.
%
\end{lemma}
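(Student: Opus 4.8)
**Proof plan for Lemma~\ref{lem: 1*}.**

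The plan is to mirror the inductive argument of Lemma~\ref{lem: 1}, with two changes: we exploit the fact that the one-dimensional Erd\H{o}s--Szekeres bound is $(k-1)^2+1$ rather than the exponential Ramsey bound $r^{rk}$, and we track only ``increasing/decreasing in each direction'' rather than a full colouring, which is what lets us save the factor of $d$ in the exponent. I will argue by induction on $d$ that $g(d)=c^{d-1}$ for a suitable absolute constant $c$ works. The base case $d=1$ is Erd\H{o}s--Szekeres: if $N\ge k^2/\epsilon\ge \epsilon^{-1}k$ then any $S\subseteq[N]$ of size $\ge\epsilon N$ has at least $k^2$ elements and hence a monotone subsequence of length $k$.

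For the inductive step, assume $d\ge2$. Let $T\subseteq[N]^{d-1}$ be the set of $\mathbf v$ such that the fibre $\mathbf v\times[N]$ meets $S$ in at least $\epsilon N/2$ points; a counting argument gives $|T|\ge\epsilon N^{d-1}/2$. Pass to a random subset $A\subseteq[N]$ of size $\Theta(k^2/\epsilon)$ so that, for a set $U'$ of at least $(2/3)|T|$ of the $\mathbf v\in T$, the fibre $(\mathbf v\times A)\cap S$ still has $\ge k^2$ points, hence contains an increasing or decreasing subsequence of length $k$ in the last coordinate, say on a $k$-set $B_{\mathbf v}\subseteq A$. The pigeonhole is over the $\binom{|A|}{k}$ choices of $B_{\mathbf v}$ \emph{and} the one bit recording whether it is increasing or decreasing --- only a factor of $2$, not $r$ --- so there is a fixed $k$-set $B$, a fixed direction, and a set $U\subseteq U'$ with $|U|\ge \tilde\epsilon N^{d-1}$, $\tilde\epsilon=\Theta(\epsilon)\cdot\epsilon^{O(k)}\binom{|A|}{k}^{-1}\ge \epsilon^{O(k)}k^{-O(k)}$, such that $B_{\mathbf v}=B$ for all $\mathbf v\in U$. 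Now apply the inductive hypothesis to $[N]^{d-1}$ with the order pattern inherited from $[N]^{d-1}\times b$ (independent of $b\in B$ by $d$-consistency) and with $U$ in the role of $S$: since $N\ge\epsilon^{-g(d)k^{d-1}}k^{g(d)k^{d-1}}\ge\tilde\epsilon^{-g(d-1)k^{d-2}}k^{g(d-1)k^{d-2}}$ by the choice $g(d)=c^{d-1}$, we get a monotone subarray $P$ of size $[k]^{d-1}$ inside $U$. Then $P\times B$ is the desired monotone subarray of size $[k]^d$: in each of the first $d-1$ directions it is monotone because $P$ is and because every slice $[N]^{d-1}\times b$ has the same order pattern, and in direction $d$ it is monotone (all increasing, or all decreasing) because each $\mathbf v\times B$ with $\mathbf v\in U$ was chosen so, with the same direction.

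The main point requiring care --- and the only real difference from Lemma~\ref{lem: 1} --- is bookkeeping the exponents so that $g(d)$ grows only geometrically and the $k^{g(d)k^{d-1}}$ term (rather than $r^{g(d)rk^d}$) closes the induction: one must check that $\tilde\epsilon^{-g(d-1)k^{d-2}}\cdot k^{g(d-1)k^{d-2}}\le \epsilon^{-g(d)k^{d-1}}k^{g(d)k^{d-1}}$, which follows since $\tilde\epsilon^{-1}\le (\epsilon^{-1}k)^{O(k)}$ so the left side is at most $\big(\epsilon^{-1}k\big)^{O(k)\cdot g(d-1)k^{d-2}}=\big(\epsilon^{-1}k\big)^{O(g(d-1))k^{d-1}}$, and we simply take $c$ large enough that $O(g(d-1))\le g(d)=c\cdot g(d-1)$. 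I do not expect any genuine obstacle beyond this constant-chasing; the structural argument is identical in spirit to the Ramsey case, with Erd\H{o}s--Szekeres substituted for Ramsey's theorem and a binary (increasing/decreasing) label in place of an $r$-colouring.
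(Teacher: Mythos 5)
Your proposal is correct and follows essentially the same route as the paper's own proof: the identical induction with the fibre set $T$, a random subset $A$ of size $\Theta(k^2/\epsilon)$, Erd\H{o}s--Szekeres in each fibre, pigeonhole over the $2\binom{|A|}{k}$ choices of $(B_{\mathbf v},\text{direction})$, and the inductive hypothesis applied to $U$ with the order pattern inherited from a slice. The paper's constant-chasing realises your $g(d)=c^{d-1}$ as $g(d)=2\cdot 15^{d-1}$, exactly as you anticipate.
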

\begin{proof}
We follow the proof of Lemma~\ref{lem: 1}, with $k^2$ playing the role of $r^{rk}$, $g(d)=2\cdot15^{d-1}$, and with the remark that in the 1-dimensional case, the Erd\H{o}s-Szekeres Theorem implies that it is enough to have $N\geq \epsilon^{-1}(2k)^2$.
We include the full details here for completeness.

We argue by induction on $d$ that $g(d)=2\cdot15^{d-1}$ will do. It is clear that when $d=1$ it is sufficient to have $N\geq k^2/\epsilon$, which we do as $g(1)=2$. So we assume that $d\geq 2$ and we have handled smaller cases. Let $T$ be the set of elements ${\mathbf v}\in[N]^{d-1}$ such that ${\mathbf v}\times[N]$ contains at least $\epsilon N/2$ elements of $S$. A counting argument shows that $|T|\ge \epsilon N^{d-1}/2$.
Let $A\subset[N]$ be a random subset of size $(10/\epsilon)k^2$. For each ${\mathbf v}\in T$, with probability at least $2/3$ the set $\mathbf v\times A$ contains at least $k^2$ elements of $S$. So we can choose $A$ so that the set
    $$T':=\{ {\mathbf v}\in T: |({\mathbf v}\times A)\cap S|\ge k^2\}$$
has size at least $(2/3)|T|\ge(\epsilon/3)N^{d-1}$.
By the Erd\H{o}s-Szekeres Theorem, for each ${\mathbf v}\in T'$, there is a monotone subarray of size $k$ contained in 
$({\mathbf v}\times A)\cap S$, 
say on vertices $B_{\mathbf v}\subset A$. There are at most $2\binom{(10/\epsilon)k^2}k\leq \epsilon^{-9k}k^{2k}$
     choices for $B_{\mathbf v}$ and the direction of monotonicity (i.e.~increasing or decreasing), so there are $B\subset A$ and $U\subset T'$ such that
$$|U|\ge \epsilon^{9k}k^{-2k}|T'| \ge (\epsilon/3)\epsilon^{9k}k^{-2k}N^{d-1}$$
and $B_{\mathbf v}=B$ for every $\mathbf v\in U$.  
Now let $\tilde{\epsilon}=(\epsilon/3)\epsilon^{9k}k^{-2k}\ge \epsilon^{12k}k^{-2k}$, then $|U|\geq \tilde{\epsilon}N^{d-1}$. We now apply the inductive hypothesis to the subset $U\subseteq [N]^{d-1}$, with the values inherited from $[N]^{d-1}\times b$  (which by consistency are the same for any $b\in B$).
Since $$N\geq \epsilon^{-g(d)k^{d-1}}k^{g(d)k^{d-1}}
\geq \tilde{\epsilon}^{-g(d-1)k^{d-2}}\cdot k^{g(d-1)k^{d-2}},$$ 
we obtain a $(d-1)$-dimensional monotone subarray of size $[k]^{d-1}$.  Then $P\times B$ gives a monotone subarray of size $[k]^d$.
\end{proof}

It is thus enough to find a $d$-consistent array.
The  following lemma is analogue of \Cref{lem: 2} for arrays. 

\begin{lemma}\label{lem: 2*}
For every positive integer $d$ there is $f(d)$ such that for every $k$ the following holds. Let $N\coloneqq k^{f(d)k^{d-1}}$. Then in every $d$-dimensional array of size $[N]^d$  there is a $d$-consistent subarray of size $[k]^d$. 
\end{lemma}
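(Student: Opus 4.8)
The plan is to mirror the inductive proof of \Cref{lem: 2}, replacing ``monochromatic'' by ``monotone'' and Ramsey's Theorem by the Erd\H{o}s--Szekeres Theorem, and tracking the exponent carefully so that we get $f(d)k^{d-1}$ rather than $f(d)k^d$ in the exponent of $N$. I would argue by induction on $d$, claiming that some $f(d)$ of the form $f(d)=2^{d-1}d!$ (or a similar product) works. The base case $d=1$ is exactly the Erd\H{o}s--Szekeres Theorem: an array on $[N]$ with $N\ge k^2$ (indeed $N = k^{f(1)}$ with $f(1)=2$) contains a monotone subarray of length $k$, and a $1$-dimensional monotone array is by definition $1$-consistent.

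For the inductive step, set $M\coloneqq k^{f(d-1)k^{d-2}}$ and consider the array $f$ restricted to $[M]^{d-1}\times[N]$. For each fixed last coordinate $a\in[N]$, the inductive hypothesis applied to the $(d-1)$-dimensional slice $f\colon[M]^{d-1}\times a\to\mathbb{R}$ produces a $(d-1)$-consistent subarray of size $[k]^{d-1}$ sitting on some index set $A^{(a)}_1\times\dots\times A^{(a)}_{d-1}$ with a certain order pattern. The key counting step is then: how many data can distinguish these slices? The relevant data are (i) the choice of the $d-1$ index sets, of which there are at most $\binom{M}{k}^{d-1}\le M^{k(d-1)}$, and (ii) the \emph{order pattern} of the $(d-1)$-dimensional subarray of size $[k]^{d-1}$, i.e.\ the relative order of its $k^{d-1}$ entries, of which there are at most $(k^{d-1})!\le k^{(d-1)k^{d-1}}$. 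Multiplying, the number of distinct ``types'' is at most $M^{k(d-1)}k^{(d-1)k^{d-1}}$. By the pigeonhole principle, if $N$ exceeds $k$ times this quantity, there is a common type realised by a set $A\subset[N]$ of $k$ values of the last coordinate. By the definition of $d$-consistency (equal order patterns across all last-coordinate slices, each slice being $(d-1)$-consistent), the resulting subarray of size $[k]^{d-1}\times A\cong[k]^d$ is $d$-consistent. It remains to check the arithmetic: one needs $N=k^{f(d)k^{d-1}}\ge k\cdot M^{k(d-1)}k^{(d-1)k^{d-1}}$, and since $M^{k(d-1)}=k^{f(d-1)(d-1)k^{d-1}}$, this reduces to $f(d)k^{d-1}\ge f(d-1)(d-1)k^{d-1}+(d-1)k^{d-1}+1$, which holds for $f(d)=d\bigl(f(d-1)+1\bigr)$ or any sufficiently large choice such as $f(d)=2^{d-1}d!$.

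The main obstacle I anticipate is purely bookkeeping rather than conceptual: ensuring that the number of distinct configurations of a $(d-1)$-consistent $[k]^{d-1}$-subarray is genuinely bounded by something of the form $k^{O_d(k^{d-1})}$ (as opposed to, say, $k^{O_d(k^d)}$, which would be fatal for the improved exponent). The point is that a $d$-consistent array is determined only by the linear order of its entries together with the index sets used, and there are only $(k^{d-1})!$ linear orders on $k^{d-1}$ points, giving the right count; one must resist the temptation to enumerate colourings of edges as in \Cref{lem: 2}, which would cost $r^{k^d}$-type factors. A minor additional point is that the order-pattern notion must be consistent with the two bullets in the definition of $d$-consistency; this is immediate since fixing the global order pattern of a slice forces both the ``same order pattern for every $a$'' condition and (via the inductive hypothesis) the ``$(d-1)$-consistent'' condition.

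Once \Cref{lem: 1*} and \Cref{lem: 2*} are in hand, \Cref{thm: mainES} follows exactly as \Cref{thm:mainRamsey} does: first apply \Cref{lem: 2*} to extract a $d$-consistent subarray of size $[t]^d$ with $t$ doubly exponential's inner exponent of order $n^{d-1}$; then, inside each line in the last direction, apply Erd\H{o}s--Szekeres to find a monotone subarray of length $n$, and pigeonhole on the (constantly many) directions and index sets to find a large set $S$ in the first $d-1$ coordinates all sharing the same monotone length-$n$ subarray in direction $d$; finally apply \Cref{lem: 1*} to $S$ to pull out a monotone $[n]^{d-1}$-subarray, whose product with the fixed length-$n$ line is the desired monotone $[n]^d$-subarray. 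Chasing the constants through gives $M_d(n)\le 2^{2^{C_dn^{d-1}}}$ for a suitable $C_d$.
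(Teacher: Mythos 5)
Your proposal is correct and follows essentially the same route as the paper: the same induction on $d$, the same slice-by-slice application of the inductive hypothesis to $[M]^{d-1}\times\{a\}$ with $M=k^{f(d-1)k^{d-2}}$, and the same pigeonhole over the at most $\binom{M}{k}^{d-1}\cdot(k^{d-1})!\le M^{k(d-1)}k^{(d-1)k^{d-1}}$ types (index sets together with order patterns), which is exactly the $k^{O_d(k^{d-1})}$ count you identify as the crucial point. The arithmetic recursion $f(d)\ge(d-1)(f(d-1)+1)+1$ you derive is satisfied by the paper's choice $f(d)=2^{d-1}d!$ as well as your $f(d)=d(f(d-1)+1)$.
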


\begin{proof}
We argue by induction on $d$ that $f(d)=2^{d-1}d!+1$ will do. For $d=1$, this is true by the Erd\H{o}s-Szekeres Theorem.
Now let $M\coloneqq k^{f(d-1)k^{d-2}}$. By induction any array on $[M]^{d-1}$ contains a $(d-1)$-consistent array of size $[k]^{d-1}$ . Therefore, for every $a\in [N]$ the $(d-1)$-dimensional array on $[M]^{d-1}\times \{a\}$ contains a $(d-1)$-consistent array on $A_1\times \dots\times A_{d-1}\times \{a\} $, where $|A_i|=n$ for all $i\in [d-1]$. 
 As there are most $k^{(d-1)k^{1+(d-2)}}$ possible orderings of an array of size $k^{d-1}$, and at most  $\binom{M}{k}^{d-1}$ 
choices for the sets $A_i$,
 there are at most $k^{(d-1)k^{1+(d-2)}}\binom{M}{k}^{d-1}< k^{(d-1)k^{d-1}}M^{k(d-1)}/k\le k^{f(d)k^{d-1}}/k=N/k$ possible combinations. Thus, by the pigeonhole principle there are an ordering $\mathcal O$ of $[k]^{d-1}$ and a set $A\subset [N]$ of size $k$, where  $[k]^{d-1} \times \{a\}$ have the same ordering $\mathcal O$
 for every $a\in A$ and all these $(d-1)$-dimensional arrays lie on the same vertex set in the first $d-1$ dimensions, as we wanted to show. 
\end{proof}

\begin{proof}[Proof of \Cref{thm: mainES}]
We will show that the statement holds with $C_d=4dg(d-1)+f(d)$, where $f$ and $g$ are
any functions satisfying \Cref{lem: 1*} and \Cref{lem: 2*}. Let $N\coloneqq n^{n^{C_dn^{d-1}}}$. Let $t\coloneqq n^{3g(d-1)n^{d-1}}$ and $u\coloneqq n^{2}$. 
Applying Lemma~\ref{lem: 2*} to $[N]^d$, with $t$ playing the role of $k$, we obtain a $d$-consistent array on $B_1\times\dots\times B_d\coloneqq B$, where $|B_i|=t$ for all $i\in[d]$. Relabelling, and restricting the final coordinate to $u$ choices, we may assume that we have a $d$-consistent 
array on $[t]^{d-1}\times [u]$.
For every $a_1\times \dots \times a_{d-1}\in[t]^{d-1}$, we can apply the Erd\H{o}s-Szekeres Theorem to the 1-dimensional array on $a_1\times \dots \times a_{d-1}\times [u]$ to get a monotone subarray of size $n$. 
By a simple counting argument there are at most $\binom{u}{n}\cdot 2$ choices for the coordinates of the subarray and whether it is increasing or decreasing,
and hence there is some choice of these which occurs on a fraction of at least $(2\cdot \binom{u}{n})^{-1}\coloneqq \hat\epsilon$ of the vertices of $[t]^{d-1}$, say on a set $S$.  let $A\subseteq[u]$ be the common choice of coordinates for the monotonic subarrays corresponding to $S$.

Finally, we apply \Cref{lem: 1*} to $S\subseteq[t]^{d-1}$. By the choices of $t,u,\hat\epsilon$ we have that $t\geq \hat\epsilon^{-g(d-1)n^{d-2}}\cdot n^{g(d-1)n^{d-2}}$, and so we obtain a monotone $(d-1)$-dimensional array $T$ of size $[u]^{d-1}$. Since $B$ is $d$-consistent, $T\times \{a\}$ is monotone for every $a\in A$ (with the same choice of direction of monotonicity). By construction, this gives a monotone $d$-dimensional array on $T\times A$.
\end{proof}

\section{Concluding remarks}\label{sec:Conc}
In Theorem \ref{thm:mainRamsey}, we have given a doubly exponential upper bound on the $d$-dimensional Ramsey numbers.  From below, we have only a singly exponential bound (which follows easily by considering random colourings).  It would be very interesting to close the gap.  In particular, it would be good to know whether there is a simple exponential upper bound, or whether the numbers grow more quickly.

\begin{problem}
Fix $r,d\ge2$.  Is $R_2(2,n)$ superexponential in $n$?  
\end{problem}

The same gap between lower and upper bounds is seen in the multimensional Erd\H{o}s-Szekeres Theorem.  
Buci\'c, Sudakov and Tran \cite{Sudakov} gave a doubly exponential upper bound for $d=2,3$; and 
\Cref{thm: mainES} gives a doubly exponential upper bound for $d\ge4$ (improving on the previous triply exponential upper bound \cite{Sudakov}).  But a frustrating gap between singly and doubly exponential bounds remains.  

It is easy to see that $M_2(n)\leq R_2(2,n)$ for every $n$. It would be interesting to see if $R_2(2, n)$ can be bounded above by a function of $M_2(n)$.
For example, is the following true. 
\begin{problem}
Does there exist $C>0$ such that $\log R_2(2,n)\leq (\log(M_2(n))^C$?
\end{problem}

\section{Acknowledgements}
The authors would like to thank Zachary Hunter and Matija Buci\'c for their helpful comments. We also thank the anonymous referee for their suggestions.

\bibliographystyle{amsplain}


\begin{dajauthors}
\begin{authorinfo}[antonio]
Ant\'onio Gir\~ao\\
Mathematical Institute\\
University of Oxford\\
Oxford, United Kingdom\\
  girao\imageat{}maths\imagedot{}ox\imagedot ac \imagedot uk 
\end{authorinfo}
\begin{authorinfo}[gal]
 Gal Kronenberg\\
Mathematical Institute\\
University of Oxford\\
Oxford, United Kingdom\\
  kronenberg\imageat{}maths\imagedot{}ox\imagedot ac \imagedot uk 
\end{authorinfo}
\begin{authorinfo}[alex]
 Alex Scott\\
Mathematical Institute\\
University of Oxford\\
Oxford, United Kingdom\\
  kronenberg\imageat{}maths\imagedot{}ox\imagedot ac \imagedot uk 
\end{authorinfo}

\end{dajauthors}

\end{document}